  \newcommand{\mathsetfont}{\mathbb}
  \newcommand{\mathsetfont}{\mathbbm}}
\newcommand{\DeclareMathSet}[1]{%
  \expandafter\newcommand\csname set#1\endcsname{\mathsetfont{#1}}}
\DeclareMathOperator{\Map}{Map}
\DeclareMathOperator{\Hom}{Hom}
\DeclareMathOperator{\Coind}{Coind}       
\def\cxymatrix#1{\xy*[c]\xybox{\xymatrix#1}\endxy}
\newcommand{\inv}{^{-1}}
\renewcommand{\phi}{\varphi}
\renewcommand{\epsilon}{\varepsilon}
\renewcommand{\rho}{\varrho}
\newcommand{\SL}{\mathrm{SL}}
\newcommand{\SU}{\mathrm{SU}}
\theoremstyle{plain}
\newtheorem{lemma}{Lemma}
\newtheorem{theorem}[lemma]{Theorem}
\newtheorem{proposition}[lemma]{Proposition}
\theoremstyle{nonumberplain}
\newtheorem{proof}{Proof}
\author{Jørgen Ellegaard Andersen \and Rasmus Villemoes}
\title{The first cohomology of the mapping class group with coefficients in
algebraic functions on the $\SL_2(\setC)$ moduli space}
\begin{document}

\maketitle

\begin{abstract}
  \noindent
  Consider a compact surface of genus at least two. We prove
  that the first cohomology group of the mapping class group with
  coefficients in the space of algebraic functions on the
  $\SL_2(\setC)$ moduli space vanishes.
\end{abstract}

\section{Introduction}
\label{sec:introduction}

Let $\Sigma$ be a compact surface, possibly with boundary, of genus at
least $2$, and let $\mathcal{M} = \mathcal{M}_{\SL_2(\setC)}$ denote
the moduli space of flat $\SL_2(\setC)$ connections over $\Sigma$.
Since $\mathcal{M}$ may be identified with the space of $\SL_2(\setC)$
representations of the fundamental group of $\Sigma$ modulo
conjugation, $\mathcal{M}$ has the structure of an algebraic variety.
The mapping class group $\Gamma$ acts on $\mathcal{M}$ and hence on
the space $\mathcal{O} = \mathcal{O}(\mathcal{M})$ of algebraic
functions on $\mathcal{M}$, making $\mathcal{O}$ a module over
$\Gamma$. The purpose of the present paper is to prove
\begin{theorem}
  \label{thm:1}
  The first cohomology group $H^1(\Gamma, \mathcal{O})$ vanishes.
\end{theorem}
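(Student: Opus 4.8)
The plan is to use the module structure of $\mathcal{O}$ to reduce the statement to a combinatorial problem about coset spaces of $\Gamma$, and then to analyze those coset spaces geometrically. Recall (Bullock; Przytycki--Sikora) that $\mathcal{O}$ is spanned by the trace functions $\tau_\gamma$, $\gamma$ ranging over free homotopy classes of loops on $\Sigma$, and that after the standard renormalization it admits a $\setC$-basis indexed by the isotopy classes of (possibly empty) \emph{multicurves} on $\Sigma$ --- the Kauffman bracket skein basis specialized at $A=-1$, using that this skein algebra is reduced. The mapping class group $\Gamma$ permutes this basis, so splitting the set of multicurve classes into $\Gamma$-orbits $\Gamma\cdot\mathcal{C}_i$ gives a $\Gamma$-module isomorphism $\mathcal{O}\cong\bigoplus_i\setC[\Gamma/\Gamma_i]$, where $\Gamma_i=\mathrm{Stab}_\Gamma(\mathcal{C}_i)$. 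Since $\Gamma$ is finitely presented, hence of type $\mathrm{FP}_2$, the functor $H^1(\Gamma,-)$ commutes with filtered colimits, in particular with this direct sum; so the theorem reduces to proving $H^1(\Gamma,\setC[\Gamma/\Gamma_i])=0$ for every multicurve $\mathcal{C}_i$.

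For the empty multicurve one has $\Gamma_i=\Gamma$ and the trivial module, and $H^1(\Gamma,\setC)=\mathrm{Hom}(\Gamma,\setC)=0$ because $\Gamma$ has finite abelianization for genus at least $2$. So fix a nonempty multicurve $\mathcal{C}$ and put $H=\Gamma_\mathcal{C}$; by the change-of-coordinates principle the orbit $\Gamma\cdot\mathcal{C}$ is infinite, so $[\Gamma:H]=\infty$ and Shapiro's lemma does not apply directly, because the module in play is the \emph{induced} module $\setC[\Gamma/H]=\mathrm{Ind}_H^\Gamma\setC$ rather than the coinduced one. I would instead picture a $1$-cocycle $u\colon\Gamma\to\setC[\Gamma/H]$ geometrically, as a finitely supported ``closed'' $1$-cochain on the Schreier coset graph $X$ --- equivalently, on the graph whose vertices are the multicurves in the orbit of $\mathcal{C}$ and whose edges record the action of a fixed finite set of Dehn twists generating $\Gamma$ --- $u$ being a coboundary exactly when it can be integrated to a \emph{finitely supported} potential function on $X$. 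Comparing $\setC[\Gamma/H]\hookrightarrow\setC^{\Gamma/H}$ with Shapiro's lemma applied to the coinduced module $\setC^{\Gamma/H}=\mathrm{Coind}_H^\Gamma\setC$ gives the exact sequence $0\to Q^\Gamma/\setC\to H^1(\Gamma,\setC[\Gamma/H])\to\mathrm{Hom}(H,\setC)$ with $Q=\setC^{\Gamma/H}/\setC[\Gamma/H]$. Hence it suffices to show: (a) $Q^\Gamma=\setC$, that is, the coset graph $X$ is \emph{one-ended} --- every function on $\Gamma\cdot\mathcal{C}$ invariant modulo finitely supported functions is constant modulo finitely supported functions; and (b) the resulting injection $H^1(\Gamma,\setC[\Gamma/H])\hookrightarrow\mathrm{Hom}(H,\setC)$ has zero image --- no nonzero homomorphism $H\to\setC$ is represented by a finitely supported cocycle on $X$.

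For (a) I would exploit the abundance of the $\Gamma$-action: since $\Sigma$ has genus at least $2$ the complement $\Sigma\setminus\mathcal{C}$ still carries enough topology that, given two multicurves in the orbit and any finite set $F$ of ``forbidden'' multicurves, they can be joined by a chain of Dehn-twist moves avoiding $F$; in the low-complexity cases ($\mathcal{C}$ a single non-separating, resp.\ separating, curve) this should be a direct curve-complex-style connectivity argument, and the general case should follow by induction on the complexity of $\mathcal{C}$, for instance by extending $\mathcal{C}$ to a pants decomposition $P$ and relating $\setC[\Gamma/\Gamma_P]$ to $\setC[\Gamma/\Gamma_\mathcal{C}]$ through the forgetful surjection and the associated long exact sequence. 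For (b) I would use known computations of the abelianization of the stabilizer $H$: the only homomorphisms $H\to\setC$ that can occur are essentially the ``twist-detecting'' ones recording the exponent of the boundary twist $T_\mathcal{C}$, and for such a $\phi$ the corresponding cocycle spreads out along the orbit of $\langle T_\mathcal{C}\rangle$ on $X$ and so cannot have finite support, giving a nonzero class in $H^1(\Gamma,Q)$; if $\mathrm{Hom}(H,\setC)=0$ outright, then (b) is immediate.

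I expect the main obstacle to be part (a): the stabilizers $H=\Gamma_\mathcal{C}$ are infinite-index subgroups with intricate structure (extensions of mapping class groups of cut surfaces by free abelian twist groups), and one must control the large-scale geometry of all their coset graphs $X$ well enough to establish one-endedness uniformly over the finitely many topological types of multicurves on $\Sigma$. Once (a) and (b) are established for each type, summing the vanishing statements over $i$ yields $H^1(\Gamma,\mathcal{O})=0$.
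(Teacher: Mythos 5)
Your reduction is exactly the paper's: decompose $\mathcal{O}$ into orbit modules $\setC[\Gamma/\Gamma_{\mathcal{C}}]$, compare with the coinduced module $\setC^{\Gamma/H}=\Coind_H^\Gamma\setC$ via the long exact sequence, and reduce to the two statements you call (a) and (b). (Your condition (a), that every almost invariant function on the orbit is constant modulo finitely supported ones, is precisely the surjectivity of $H^0(\Gamma,\hat M_D)\to H^0(\Gamma,\hat M_D/M_D)$ used in the paper; your (b) is the paper's Proposition~1.) The problem is that you prove neither. For (a), the ``curve-complex-style connectivity argument, with induction on complexity'' is not a proof sketch that visibly closes: this is the statement that the orbit of a multicurve admits no non-trivial almost invariant colorings, which is the main theorem of an entire companion paper that the authors cite, and the stabilizers are complicated enough (extensions of mapping class groups of cut surfaces by twist groups, with components possibly permuted) that one-endedness of all these Schreier graphs is genuinely hard. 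Leaving it at ``I expect the main obstacle to be part (a)'' correctly locates a difficulty but does not resolve it.

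For (b) your heuristic actually fails as stated. You argue that a twist-detecting homomorphism $\phi\colon H\to\setC$ with $\phi(T_{\mathcal{C}})\neq 0$ cannot be represented by a finitely supported cocycle because ``the cocycle spreads out along the orbit of $\langle T_{\mathcal{C}}\rangle$.'' But the cocycle identity only gives $u(T_{\mathcal{C}}^n)=\sum_{k=0}^{n-1}T_{\mathcal{C}}^k\,u(T_{\mathcal{C}})$, and since $T_{\mathcal{C}}$ fixes $\mathcal{C}$ the coefficient of $\mathcal{C}$ in this is just $n\cdot\phi(T_{\mathcal{C}})$: every individual value $u(T_{\mathcal{C}}^n)$ remains finitely supported, so no contradiction arises from support growth alone. (And $\Hom(H,\setC)$ is \emph{not} zero in general --- the paper's point is precisely that $H^1(\Gamma,\hat M_D)\cong\Hom(\Gamma_D,\setC)$ is sometimes non-zero --- so the easy escape you mention is unavailable.) The missing idea is to use relations in $\Gamma$: if $\tau_\alpha$ fixes $E$ and $E$ has a component not parallel to $\alpha$, one finds $\tau_\beta$ commuting with $\tau_\alpha$ but moving $E$ with infinite orbit; the relation $(1-\tau_\beta)u(\tau_\alpha)=(1-\tau_\alpha)u(\tau_\beta)$ then forces $u(\tau_\alpha)$ to contain infinitely many terms unless the coefficient of $E$ vanishes. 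The remaining case ($E$ a parallel copy of $\alpha$ itself) is killed by the chain relation, and a general $f\in H$ is handled by passing to a power $f^n$ that is a product of twists in curves disjoint from $\mathcal{C}$. Without these ingredients --- or some substitute --- your step (b) is a gap, not a proof.
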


The proof relies crucially on the $\Gamma$-equivariant identification
of $\mathcal{O}$ with another vector space on which the action of
$\Gamma$ is more transparent. Based on Goldman's idea of using curves
in the surface to represent functions on the moduli space, Bullock,
Frohman and Kania-Bartoszy{\'n}ska in \cite{MR1691437} (see also
\cite{ARS2006}) proved that $\mathcal{O}$ is $\Gamma$-equivariantly
isomorphic to the complex vector space spanned by the set of
multicurves on $\Sigma$. This allows one to decompose $\mathcal{O}$
into smaller $\Gamma$ modules indexed by the mapping class group
orbits of multicurves.

In \cite{0710.2203}, we computed the cohomology group $H^1(\Gamma,
\mathcal{O}^*)$, where $\mathcal{O}^*$ denotes the algebraic dual of
$\mathcal{O}$. Using the set of multicurves as a basis for
$\mathcal{O}$, there is an inclusion map $\iota \colon
\mathcal{O}\to\mathcal{O}^*$, and this induces a map on cohomology
$H^1(\Gamma, \mathcal{O})\to H^1(\Gamma, \mathcal{O}^*)$. Using the
description of the target given in \cite{0710.2203}, we first prove
that this map is zero, and then, using the results from
\cite{0802.3000}, we prove that it is injective.

\section{Motivation}
\label{sec:motivation}

The motivation for studying the first cohomology group of the mapping
class group with coefficients in a space of functions on the moduli
space came from \cite{0611126}. In that paper, the first author
studied deformation quantizations, or star products, of the Poisson
algebra of smooth functions on the moduli space $\mathcal{M}_G$ of
flat $G$-connections, where $G=\SU(n)$. The construction uses Toeplitz
operator techniques and produces a family of star products
parametrized by Teichmüller space. In \cite{0611126} the problem of
turning this family into one mapping class group invariant star
product was reduced to a question about the first cohomology group of
the mapping class group with various twisted coefficients.
Specifically, one of the results in \cite{0611126} (Proposition~6) is
that, provided the cohomology group $H^1(\Gamma,
C^\infty(\mathcal{M}_G))$ vanishes, one may find a $\Gamma$-invariant
equivalence between any two equivalent star products.  Since it is
easy to see that the only $\Gamma$-invariant equivalences are the
multiples of the identity, this immediately implies that within each
equivalence class of star products, there is at most one
$\Gamma$-invariant star product.

Theorem~\ref{thm:1} is clearly a step towards verifying the assumption
above in the case of $G=SU(2)$, since the $SU(2)$-moduli space is
included in the $\SL_2(\setC)$ moduli space.

\section{Splitting the coefficient module}
\label{sec:splitt-coeff-module}

A \emph{multicurve} is the isotopy class of a finite collection of
pairwise disjoint, simple closed curves on $\Sigma$. Let $B$ denote
the set of multicurves on $\Sigma$, and let $\mathcal{B} =
\mathcal{B}(\Sigma) = \setC B$ denote the complex vector space spanned
by $B$. In \cite{ARS2006} one finds a complete proof of
\begin{theorem}
  \label{thm:2}
  There exists a $\Gamma$-equivariant isomorphism $\nu\colon \mathcal{B}\to
  \mathcal{O}$.
\end{theorem}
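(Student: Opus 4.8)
The plan is to realise $\nu$ concretely through Goldman's trace functions and then to verify, in turn, that it is a well-defined $\Gamma$-morphism, that it is surjective, and that it is injective, the last point being the heart of the matter. To a simple closed curve $c$ on $\Sigma$, regarded as a conjugacy class in $\pi_1(\Sigma)$, I would associate the algebraic function $\nu(c)\colon\mathcal{M}\to\setC$ given by $[\rho]\mapsto-\operatorname{tr}\rho(c)$, which is well defined because the trace is a class function; I then extend $\nu$ to a general multicurve $c=c_1\cup\cdots\cup c_k$ by the product $\nu(c)=\prod_i\nu(c_i)$, with the usual conventions that the empty multicurve goes to the constant $1$ and a contractible component contributes the constant $-2$, and finally $\setC$-linearly to all of $\mathcal{B}$. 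Since $\Gamma$ acts on $\mathcal{M}$ by $[\rho]\mapsto[\rho\circ f_*\inv]$ and on the set of multicurves through its action on isotopy classes of curves, a one-line computation from the definition gives $\nu(f\cdot c)=f\cdot\nu(c)$, so $\nu$ is $\Gamma$-equivariant.

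For surjectivity I would invoke the first fundamental theorem of invariant theory for $\SL_2(\setC)$: the ring $\mathcal{O}$, being the ring of simultaneous-conjugation invariants of tuples of $2\times2$ matrices indexed by generators of $\pi_1(\Sigma)$, is generated as a $\setC$-algebra by the trace functions $[\rho]\mapsto\operatorname{tr}\rho(g)$, $g\in\pi_1(\Sigma)$. It therefore suffices to show that every such trace function lies in $\nu(\mathcal{B})$. Representing $g$ by an immersed loop and repeatedly applying the identity $\operatorname{tr}(AB)+\operatorname{tr}(AB\inv)=\operatorname{tr}(A)\operatorname{tr}(B)$ to resolve self-intersections — this being precisely the Kauffman bracket skein relation specialised at $A=-1$ — rewrites $\operatorname{tr}\rho(g)$ as a $\setC$-linear combination of products of traces of simple closed curves, hence as an element of $\nu(\mathcal{B})$. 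As $\nu(\mathcal{B})$ is manifestly a subalgebra of $\mathcal{O}$, it must be all of $\mathcal{O}$.

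The genuine obstacle is \emph{injectivity}: one must show that the multicurves are sent to $\setC$-linearly independent functions, i.e.\ that the skein relations already exhaust the relations among trace functions. The conceptual route is to factor $\nu$ through the Kauffman bracket skein algebra $K_A(\Sigma)$, which by Przytycki's basis theorem is a free $\setC[A^{\pm1}]$-module on the set $B$ of multicurves; after specialising $A=-1$ one gets $\mathcal{B}=\setC B\cong K_{-1}(\Sigma)$, and under this identification $\nu$ becomes the canonical map $K_{-1}(\Sigma)\to\mathcal{O}$. That this map is an isomorphism — equivalently, that the character ring has no nilpotents — is the Przytycki--Sikora sharpening of Bullock's theorem, and it yields injectivity of $\nu$. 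A more elementary alternative, and the one developed in detail in \cite{ARS2006} and \cite{MR1691437}, bypasses the skein-module machinery: one tests a hypothetical relation $\sum_j\lambda_j\nu(c_j)=0$ on families of representations adapted to a pants decomposition of $\Sigma$ (diagonal, and suitably triangular, representations) and peels off the coefficients $\lambda_j$ one multicurve at a time. In either approach, the combinatorial bookkeeping required to separate the finitely many multicurves occurring in a given relation is where essentially all the work lies.
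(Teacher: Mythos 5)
Your construction of $\nu$ (the sign $(-1)^k$ times the product of trace-of-holonomy functions over the components) is exactly the map the paper uses, and your outline --- equivariance by inspection, surjectivity via trace generation of the invariant ring plus the relation $\operatorname{tr}(AB)+\operatorname{tr}(AB\inv)=\operatorname{tr}(A)\operatorname{tr}(B)$, injectivity via the skein-module/character-ring identification or the representation-testing argument of \cite{ARS2006} --- is precisely the strategy of the sources the paper cites for this theorem (the paper itself gives no proof, deferring entirely to \cite{ARS2006}). This is correct and essentially the same approach.
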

If $D = \sqcup_{i=1}^n \gamma_i$ is the disjoint union of simple
closed curves $\gamma_i$, $\nu(D)$ is simply $(-1)^n\prod_{i=1}^n
f_{\vec\gamma_i}$, where $\vec \gamma_i$ denotes any of the oriented
versions of $\gamma_i$, and $f_{\vec\gamma_i}$ is Goldman's holonomy
function on the moduli space.

Theorem~\ref{thm:2} allows us to split $\mathcal{O}$ according to the
mapping class group orbits of multicurves. More precisely, for a
multicurve $D$, let $M_D = \setC(\Gamma D)$ denote the complex vector
space spanned by the $\Gamma$-orbit through $D$. Then we have a
decomposition as $\Gamma$-modules
\begin{align}
  \label{eq:1}
  \mathcal{O} \cong \mathcal{B} \cong \bigoplus_D M_D
\end{align}
where the sum is over a set of representatives of the mapping class
group orbits of multicurves. This induces a corresponding
decomposition of the cohomology
\begin{align}
  \label{eq:2}
  H^1(\Gamma, \mathcal{B}) \cong \bigoplus_D H^1(\Gamma, M_D).
\end{align}
Hence it suffices to show that each summand on the right-hand side of
\eqref{eq:2} vanishes in order to prove Theorem~\ref{thm:1}.

\section{A larger module}
\label{sec:larger-module}

It turns out to ease the computation of $H^1(\Gamma, M_D)$ if one
introduces a larger module. Let $\mathcal{B}^*$ denote the algebraic
dual of $\mathcal{B}$. Using the set of multicurves as a basis, there
is a $\Gamma$-equivariant inclusion $\colon
\mathcal{B}\to\mathcal{B}^*$. In fact, we may identify $\mathcal{B}^*$
with the space $\Map(B, \setC)$ of all formal linear combinations of
multicurves. There is a decomposition of $\mathcal{B}^*$ similar to
\eqref{eq:1} into a direct product of $\Gamma$-modules,
\begin{align}
  \label{eq:3}
  \mathcal{B}^* \cong \prod_D \hat M_D,
\end{align}
where $\hat M_D = \Map(\Gamma D, \setC)$ denotes the set of all formal
linear combinations of elements of the orbit through $D$, and the
product is over the same set of repersentatives as in \eqref{eq:1}.

The $\Gamma$-equivariant inclusion $\iota\colon M_D\to \hat M_D$
induces a long exact sequence in cohomology, the first part of which is
\begin{align}
  \label{eq:4}
  \cxymatrix{{0\ar[r] & H^0(\Gamma, M_D) \ar[r] & H^0(\Gamma, \hat M_D)
      \ar[r] & H^0(\Gamma, \hat M_D/M_D) \\
      \ar[r] & H^1(\Gamma, M_D) \ar[r]^{\iota_*} & H^1(\Gamma, \hat M_D). }}
\end{align}

In \cite{0710.2203}, we computed $H^1(\Gamma, \hat M_D)$ for any
multicurve $D$, and showed that for any surface there exists a
multicurve such that $H^1(\Gamma, \hat M_D)$ is non-zero.

We need the description of $H^1(\Gamma, \hat M_D)$ given in
\cite{0710.2203}, so let us recall the most important facts. Let
$\Gamma_D\subseteq \Gamma$ denote the stabilizer of $D$ in $\Gamma$
(permutation of the components of $D$ are allowed). Then the
$\Gamma$-equivariant identification of the set $\Gamma/\Gamma_D$ of
left cosets with the orbit $\Gamma D$ induces an isomorphism of
$\hat M_D = \Map(\Gamma D, \setC)$ with the space
$\Hom_{\setZ\Gamma_D}(\setZ\Gamma, \setC)$ of
$\setZ\Gamma_D$-homomorphisms $\setZ\Gamma\to\setC$.

This $\Gamma$-module is also known as the co-induced module
$\Coind_{\Gamma_D}^{\Gamma} \setC$, and Shapiro's Lemma (see
\cite{MR672956}) yields an isomorphism
\begin{align}
  \label{eq:5}
  H^1(\Gamma, \hat M_D) = H^1(\Gamma, \Coind_{\Gamma_D}^{\Gamma}
  \setC) \cong H^1(\Gamma_D, \setC)
\end{align}
where $\setC$ is a trivial $\Gamma_D$-module. Hence $H^1(\Gamma, \hat
M_D)$ is simply the space of homomorphisms from (the abelianization
of) $\Gamma_D$ to $\setC$.

Explicitly, the isomorphism \eqref{eq:5} is given as follows: An
element of $H^1(\Gamma, \hat M_D)$ is represented by a cocycle
$u\colon \Gamma\to \hat M_D$, which can also be considered as a map
$u\colon \Gamma\times \Gamma D\to\setC$. Restricting to the subset
$\Gamma_D \times \{D\} \equiv \Gamma_D$ we obtain a map $u_|\colon
\Gamma_D\to \setC$, which is easily seen to be a homomorphism. In
other words, $u_|(g)$ is given by picking out the coefficient of $D$
in $u(g)$.

\section{Dehn twists and multicurves}
\label{sec:dehn-twists-mult}

Before starting actual computations leading to a proof of
Theorem~\ref{thm:1}, we need to record a few facts regarding Dehn
twists, multicurves and the modules $M_D$, $\hat M_D$.

\subsection{Presentations and relations}
\label{sec:pres-relat}

It is well-known that the mapping class group is generated by Dehn
twists. In fact, several finite presentations of $\Gamma$ are known,
where the generators are the twists in a suitable set of simple closed
curves (cf. \cite{MR719117}, \cite{MR1851559}).

For later use, we mention a few relations between Dehn twists.
\begin{lemma}
  \label{lem:2}
  Dehn twists on disjoint curves commute.
\end{lemma}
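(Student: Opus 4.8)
The plan is to prove the statement purely geometrically, using the definition of a Dehn twist as a homeomorphism supported in an annular neighbourhood of the curve in question.

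First I would fix the two disjoint simple closed curves, say $a$ and $b$, and choose disjoint closed annular neighbourhoods $A$ of $a$ and $B$ of $b$; this is possible precisely because $a$ and $b$ are disjoint (and one may further isotope them to be in minimal position, where they meet zero times). The Dehn twist $T_a$ can be chosen to be represented by a homeomorphism of $\Sigma$ that is the identity outside $A$, and likewise $T_b$ by a homeomorphism that is the identity outside $B$. Since $A\cap B=\emptyset$, the composition $T_a T_b$ and the composition $T_b T_a$ agree pointwise: on $A$ the map $T_b$ acts as the identity, so $T_a T_b$ and $T_b T_a$ both act as $T_a$ there; on $B$ symmetrically both act as $T_b$; and outside $A\cup B$ both act as the identity. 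Hence $T_a T_b = T_b T_a$ as homeomorphisms, and a fortiori in the mapping class group $\Gamma$.

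The one point requiring a little care — and the only real obstacle — is making the choice of supports precise and checking that the resulting mapping classes are indeed the standard Dehn twists: one should note that the mapping class of a Dehn twist does not depend on the choice of annular neighbourhood (any two such neighbourhoods are isotopic), so we are free to pick the disjoint representatives $A$ and $B$ above. Everything else is a direct computation with functions supported on disjoint sets. I would therefore present this as essentially a one-paragraph argument, perhaps citing a standard reference (such as \cite{MR719117}) for the statement that disjointly supported homeomorphisms commute and that the isotopy class of a Dehn twist is independent of the chosen annulus.
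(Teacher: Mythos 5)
Your argument is correct and is exactly the standard proof of this fact; the paper itself offers no proof, simply recording the relation as well known and pointing to the references on presentations of the mapping class group. Choosing disjoint annular supports and noting that disjointly supported homeomorphisms commute on the nose (together with the independence of the twist's mapping class from the choice of annulus) is precisely the argument those references supply, so nothing is missing.
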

\begin{lemma}
  \label{lem:3}
  If $\alpha$ and $\beta$ are simple closed curves intersecting
  transversely in a single point, the associated Dehn twists are
  \emph{braided}. That is, $\tau_\alpha\tau_\beta\tau_\alpha =
  \tau_\beta\tau_\alpha\tau_\beta$.
\end{lemma}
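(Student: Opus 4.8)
The plan is to reduce the braid relation to a single equality of isotopy classes of simple closed curves, which can then be checked locally on a one-holed torus. Throughout I would use the conjugation formula $f\tau_\gamma f\inv=\tau_{f(\gamma)}$, valid for any orientation-preserving homeomorphism $f$, together with the standard fact that for essential, non-boundary-parallel simple closed curves $\gamma,\delta$ one has $\tau_\gamma=\tau_\delta$ in $\Gamma$ if and only if $\gamma$ and $\delta$ are isotopic (see, e.g., \cite{MR719117}).

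First I would rewrite the desired identity. The relation $\tau_\alpha\tau_\beta\tau_\alpha=\tau_\beta\tau_\alpha\tau_\beta$ is equivalent to $\tau_\alpha\tau_\beta\tau_\alpha\tau_\beta\inv\tau_\alpha\inv=\tau_\beta$, and two applications of the conjugation formula turn the left-hand side into $\tau_\alpha\,\tau_{\tau_\beta(\alpha)}\,\tau_\alpha\inv=\tau_{(\tau_\alpha\tau_\beta)(\alpha)}$. By the injectivity statement above, the braid relation is therefore equivalent to the single assertion
\begin{align*}
  (\tau_\alpha\tau_\beta)(\alpha)=\beta
\end{align*}
as unoriented isotopy classes of simple closed curves on $\Sigma$ (here $\tau_\alpha\tau_\beta$ means first apply $\tau_\beta$, then $\tau_\alpha$).

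Next I would localize the verification. A closed regular neighborhood $N$ of $\alpha\cup\beta$ is an orientable surface with one boundary component and Euler characteristic $-1$, hence a one-holed torus, and it contains representatives of $\alpha$, $\beta$ and $(\tau_\alpha\tau_\beta)(\alpha)$; since an isotopy carried out inside $N$ extends to $\Sigma$, it suffices to verify the displayed equality in $N$. In $H_1(N;\setZ)\cong\setZ^2$, with $[\alpha],[\beta]$ a basis in which the algebraic intersection pairing is the standard symplectic form, the twists $\tau_\alpha,\tau_\beta$ act as the transvections $v\mapsto v+\langle v,[\alpha]\rangle[\alpha]$ and $v\mapsto v+\langle v,[\beta]\rangle[\beta]$, and a one-line computation gives $(\tau_\alpha\tau_\beta)_*[\alpha]=\pm[\beta]$. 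Since on the one-holed torus the isotopy classes of essential, non-peripheral simple closed curves are in bijection with the primitive classes of $H_1(N;\setZ)$ taken up to sign, this homological identity upgrades to the required equality of isotopy classes. (Alternatively, one can draw $N$ as a square with a hole removed and simply trace the effect of the two twists on $\alpha$.)

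The step most in need of care is the passage from homology back to isotopy classes on the one-holed torus, together with the injectivity of $\gamma\mapsto\tau_\gamma$ on isotopy classes; both are classical, but each relies on the surface and curves being of the expected type (genus at least one, curves essential and not boundary-parallel), which holds in our setting. Granting these facts, the argument above reduces the lemma to the transvection computation, which is immediate.
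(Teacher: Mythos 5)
The paper does not prove this lemma at all: it is quoted, together with the commutation and chain relations, as a standard fact about Dehn twists with references to the literature, so there is no in-paper argument to compare yours against. Your proof is correct and is the standard textbook derivation. The reduction via the conjugation formula $f\tau_\gamma f\inv=\tau_{f(\gamma)}$ to the single identity $(\tau_\alpha\tau_\beta)(\alpha)=\beta$ is exactly right; note that for the implication you actually need --- that this identity yields the braid relation --- the injectivity of $\gamma\mapsto\tau_\gamma$ is not required, only the trivial fact that isotopic curves give equal twists, so that hypothesis could be dropped. The localization to a regular neighborhood $N$ of $\alpha\cup\beta$ is legitimate (both twists can be supported inside $N$, and an isotopy in $N$ gives one in $\Sigma$), and $N$ is indeed a one-holed torus since it deformation retracts onto a wedge of two circles and has connected boundary. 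The transvection computation $(\tau_\alpha)_*\bigl([\alpha]+[\beta]\bigr)=[\alpha]+[\beta]+\langle[\beta],[\alpha]\rangle[\alpha]=\pm[\beta]$ checks out, and the passage back to isotopy classes uses the classical classification of essential non-peripheral simple closed curves on the one-holed torus by primitive homology classes up to sign, which applies here because $(\tau_\alpha\tau_\beta)(\alpha)$ is the image of such a curve under a homeomorphism of $N$. So, granting the two classical inputs you explicitly flag, the argument is complete; it supplies a proof where the paper merely supplies a citation.
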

\begin{lemma}[Chain relation]
  \label{lem:4}
  Let $\alpha$, $\beta$ and $\gamma$ be simple closed curves in a
  two-holed torus as in Figure~\ref{fig:chain-relation}, and let
  $\delta$, $\epsilon$ denote curves parallel to the boundary
  components of the torus. Then $(\tau_\alpha\tau_\beta\tau_\gamma)^4
  = \tau_\delta\tau_\epsilon$.
\end{lemma}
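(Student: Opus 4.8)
The plan is to deduce the chain relation from the classical presentation of the braid group via the Birman--Hilden correspondence. First I would recognise the two-holed torus $N$ of Figure~\ref{fig:chain-relation} as the orientation double cover of a disk $D$ with four marked points $p_1,p_2,p_3,p_4$, branched exactly over the $p_i$. Concretely, $N$ carries an orientation-preserving involution $\iota$ that preserves each of $\alpha$, $\beta$, $\gamma$ and reverses the chain; the quotient $N/\iota$ is a disk, the four fixed points of $\iota$ descend to the $p_i$, and $\alpha$, $\beta$, $\gamma$ are the lifts of a standard chain of arcs $a_1,a_2,a_3$, where $a_i$ joins $p_i$ to $p_{i+1}$. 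Since there are four branch points, an even number, the preimage of $\partial D$ is a pair of circles, and these are exactly the two boundary curves of $N$, i.e.\ $\delta$ and $\epsilon$. (A quick Euler characteristic check, $\chi(N)=2\chi(D)-4=-2$, confirms $N\cong S_{1,2}$.)

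Next I would invoke the Birman--Hilden theorem for this branched cover: the subgroup of the mapping class group of $N$ consisting of the classes that commute with $\iota$, taken modulo $\langle\iota\rangle$, is isomorphic to the mapping class group of the disk relative to $\{p_1,\dots,p_4\}$, namely the four-strand braid group $B_4$. Under this isomorphism the half-twist $\sigma_i$ exchanging $p_i$ and $p_{i+1}$ along $a_i$ is covered by the Dehn twist $\tau_\alpha$, $\tau_\beta$, $\tau_\gamma$ for $i=1,2,3$ respectively, while the Dehn twist along a curve parallel to $\partial D$ is covered by the product $\tau_\delta\tau_\epsilon$ of twists along the two boundary components of $N$. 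One then needs only the classical relation in $B_4$ that the full twist $(\sigma_1\sigma_2\sigma_3)^4$ equals the boundary twist of $D$ (equivalently, that it generates the centre of $B_4$; more generally $\Delta^2=(\sigma_1\cdots\sigma_{n-1})^n$ in $B_n$). Pulling this identity back through the Birman--Hilden isomorphism replaces the left-hand side by $(\tau_\alpha\tau_\beta\tau_\gamma)^4$ and the right-hand side by $\tau_\delta\tau_\epsilon$, which is the assertion of the lemma.

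I expect the only delicate point to be the bookkeeping in the Birman--Hilden dictionary rather than any single deep step: one must exhibit the symmetry $\iota$ explicitly on the surface as drawn, verify that the three chain curves are genuinely the lifts of a standard chain of arcs in the quotient disk, and fix orientation conventions so that the lifted boundary twist comes out as $\tau_\delta\tau_\epsilon$ and not its inverse or some mixed-sign variant. I would settle the orientations by tracking a single transverse arc through the double cover. Everything else -- the braid relation and the Birman--Hilden isomorphism itself -- is entirely standard and can simply be cited; alternatively, for a self-contained account one can record this as a well-known relation (it appears in the standard references on mapping class group presentations already cited above) and omit the proof.
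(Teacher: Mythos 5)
Your argument is correct, and it is the standard textbook derivation of the length-three chain relation (it is essentially the proof one finds in Farb--Margalit via the hyperelliptic involution). The paper itself, however, offers no proof at all: the chain relation is recorded as a known relation among Dehn twists, with the burden carried by the cited references on presentations of the mapping class group. So there is nothing in the paper to compare your route against; you have supplied a proof where the authors chose to quote one. Your own closing remark -- that one could simply cite the relation -- is exactly the option the paper takes. Two small points of bookkeeping if you do write this out in full. First, since the mapping class group of the two-holed torus is usually taken with the boundary fixed pointwise, the involution $\iota$ (which swaps $\delta$ and $\epsilon$) is not itself an element of that group, and the Birman--Hilden statement in this setting is cleaner than the one you quote: the symmetric mapping class group of $N$ is isomorphic to $B_4$ with no further quotient by $\langle\iota\rangle$ needed. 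Second, your orientation worry is genuine but harmless here: both sides of the relation are products of \emph{positive} twists precisely because $\sigma_i$ lifts to a single positive Dehn twist (the annulus neighbourhood of $a_i$ lifts to an annulus double-covering it, so a half-twist upstairs of the full twist downstairs is a full twist on the lifted curve), and the boundary twist lifts to $\tau_\delta\tau_\epsilon$ because the preimage of $\partial D$ is disconnected, each component mapping by degree one.
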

\begin{figure}[hbt]
  \centering
  \quad
  \subfloat[A two-holed torus.\label{fig:chain-relation-real}]
  {\includegraphics[trim=-10 0 -10 0,clip]{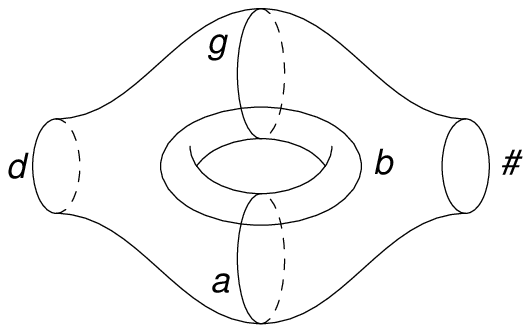}}
  \hfill
  \subfloat[A more schematic picture.\label{fig:chain-relation-schematic}]
  {\includegraphics[trim=-10 0 -10
    0,clip]{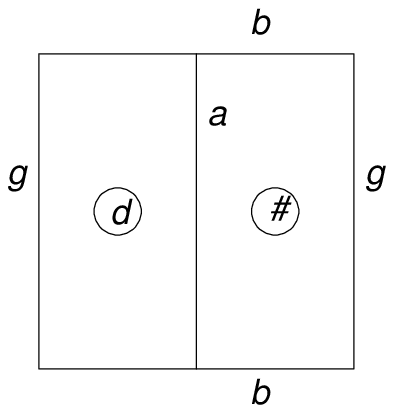}}
  \quad\strut
  \caption{The chain relation.}
  \label{fig:chain-relation}
\end{figure}

\subsection{The action of twists on multicurves}
\label{sec:acti-twists-mult}

There is simple way to parametrize the set of all multicurves which
was found by Dehn. For details, we refer to~\cite{MR1144770}.
Essentially one cuts the surface into pairs of pants using $3g+r-3$
simple closed curves $\gamma_k$, and then for each pants curve
$\gamma_k$ one records the geometric intersection number $m_k(D) =
i(\gamma_k, D)$ (which is a non-negative integer) and a ``twisting
number'' $t_k(D)$, which can be any integer. This defines a
$6g+2r-6$-tuple of integers $(m_1(D), t_1(D), \ldots, m_{3g+r-3}(D),
t_{3g+r-3}(D))$ (satisfying certain conditions), and, conversely, from
any such tuple satisfying these conditions one may construct a
multicurve.

The important fact is that in this parametrization, the action of the
twist in the curve $\gamma_k$ on a multicurve $D$ is given by
\begin{align}
  \label{eq:7}
  t_k(\tau_{\gamma_k}^{\pm 1} D) = t_k(D) \pm m_k(D),
\end{align}
all other coordinates being unchanged. The formula \eqref{eq:7} is
intuitive in the sense that it says that for each time $D$ intersects
$\gamma_k$ essentially, the action of $\tau_{\gamma_k}$ on $D$ adds
$1$ to the twisting number of $D$ with respect to $\gamma_k$. This can
be used to prove a number of important facts.

\begin{lemma}
  \label{lem:5}
  Let $\gamma$ be a simple closed curve and $D$ a multicurve. Then the
  following are equivalent:
  \begin{enumerate}[(1)]\firmlist
  \item The twist $\tau_\gamma$ acts trivially on $D$.
  \item The twist $\tau_\gamma$ acts trivially on each component of
    $D$.
  \item The geometric intersection number between $\gamma$ and $D$ is
    zero.
  \item One may realize $\gamma$ and $D$ disjointly.
  \end{enumerate}
  Conversely, if $\tau_\gamma$ acts non-trivially on $D$, all the
  multicurves $\tau_\gamma^n D$, $n\in\setZ$, are distinct.
\end{lemma}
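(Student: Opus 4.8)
The plan is to derive all four equivalences from the parametrization formula \eqref{eq:7} together with two standard properties of the geometric intersection number: that $i(\gamma,D)=0$ precisely when $\gamma$ and $D$ admit disjoint representatives, and that $i$ is additive over the components of a multicurve, $i(\gamma,\sqcup_i\delta_i)=\sum_i i(\gamma,\delta_i)$. First I would dispose of the degenerate cases: if $\gamma$ bounds a disc or is isotopic to a boundary component, then $\tau_\gamma$ acts trivially on every multicurve and $i(\gamma,D)=0$ always, so conditions (1)--(4) all hold. Hence we may assume $\gamma$ is essential and non-peripheral, and in particular $\gamma$ can be completed to a pants decomposition $\gamma=\gamma_1,\gamma_2,\ldots,\gamma_{3g+r-3}$, with respect to which every multicurve carries Dehn coordinates $(m_1,t_1,m_2,t_2,\ldots)$ and the parametrization is a bijection.

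The heart of the argument is the loop (1)$\Rightarrow$(3)$\Rightarrow$(4)$\Rightarrow$(1). The implication (4)$\Rightarrow$(1) is immediate, since a twist about $\gamma$ is supported in an annular neighbourhood of $\gamma$, which can be taken disjoint from $D$; and (3)$\Leftrightarrow$(4) is the defining property of $i$ recalled above. For (1)$\Rightarrow$(3) I would apply \eqref{eq:7} with $k=1$: as $\tau_\gamma$ preserves $m_1=i(\gamma,\cdot)$, the multicurve $\tau_\gamma D$ has coordinates $(m_1(D),\,t_1(D)+m_1(D),\,m_2(D),\ldots)$, which differ from those of $D$ whenever $m_1(D)>0$; since the parametrization is injective, $\tau_\gamma D=D$ forces $m_1(D)=i(\gamma,D)=0$, which is (3). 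To incorporate (2), note that (3), by additivity, is equivalent to $i(\gamma,\delta_i)=0$ for every component $\delta_i$ of $D$, which by the one-component case of the equivalence already established is in turn equivalent to $\tau_\gamma$ fixing each $\delta_i$, i.e.\ to (2).

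For the concluding assertion, suppose $\tau_\gamma$ acts non-trivially on $D$. Then $m_1(D)=i(\gamma,D)\neq 0$ by the equivalences just proved, so \eqref{eq:7} gives $t_1(\tau_\gamma^{\,n}D)=t_1(D)+n\,m_1(D)$, which is strictly monotone in $n$; as distinct Dehn coordinate tuples belong to distinct multicurves, the $\tau_\gamma^{\,n}D$ for $n\in\setZ$ are pairwise distinct. The only real subtlety in the whole proof is the handling of the geometric intersection number — that it detects disjointness and is additive over components of a multicurve — which I would either quote from \cite{MR1144770} or verify directly by putting $\gamma$ and $D$ simultaneously in minimal position; everything else follows mechanically from \eqref{eq:7} and the bijectivity of Dehn's parametrization.
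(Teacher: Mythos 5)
Your proposal is correct and follows essentially the same route as the paper, which likewise reduces everything to the coordinate formula \eqref{eq:7} after completing an essential, non-peripheral $\gamma$ to a pants decomposition and disposing separately of the case where $\gamma$ is trivial or boundary-parallel. You simply fill in the details (the implication cycle, the additivity of $i$ over components, and the monotonicity of $t_1(\tau_\gamma^n D)$) that the paper leaves as a one-sentence sketch.
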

\begin{proof}
  All of the above assertions can be proved from~\eqref{eq:7} by
  letting $\gamma$ be part of a pants decomposition of the surface.
  This is clearly possible if $\gamma$ is non-separating, while if
  $\gamma$ is separating, observe that both connected components
  resulting from cutting along $\gamma$ must have negative Euler
  characteristic (otherwise $\gamma$ would be trivial or parallel to a
  boundary component, in which case the twist on $\gamma$ clearly acts
  trivially on $D$).
\end{proof}

To find a twist acting non-trivially on a multicurve, we need only
find a curve which has positive geometric intersection number with
the multicurve. This is possible if and only if the multicurve has a
component which is not parallel to a boundary component of $\Sigma$.

\subsection{Isomorphisms of modules}
\label{sec:isomorphisms-modules}

If $D$ is a multicurve, let $D^n$ denote the multicurve obtained from
$D$ by replacing each component by $n$ parallel copies. Clearly, there
are $\Gamma$-isomorphisms $M_D\to M_{D^n}$ and $\hat M_D\to\hat
M_{D^n}$. Also, if $\gamma$ is a simple closed curve parallel to a
boundary component of $\Sigma$, we have $\Gamma$-isomorphisms $M_D\to
M_{D\cup \gamma}$ and $\hat M_D \to \hat M_{D\cup \gamma}$. These
observations imply that we may without loss of generality only
consider multicurves without boundary parallel components, and
satisfying that the multiplicities of the different components are
relatively prime. Using non-standard terminology, such a multicurve
will be called \emph{reduced}.

\section{The map $\iota_*$}
\label{sec:map-iota}

Let $D$ be a reduced multicurve. The purpose of the section is to prove
\begin{proposition}
  \label{prop:1}
  The map $\iota_*\colon H^1(\Gamma, M_D) \to H^1(\Gamma, \hat M_D)$ is
  zero.
\end{proposition}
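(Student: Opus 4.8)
The plan is to trace a class through the explicit Shapiro isomorphism \eqref{eq:5} and to exploit the finiteness that distinguishes $M_D$ from $\hat M_D$. Let $c\colon\Gamma\to M_D$ be a $1$-cocycle. Composing with $\iota$ yields a cocycle representing $\iota_*[c]$, and under \eqref{eq:5} this class corresponds to the homomorphism $\phi_c\colon\Gamma_D\to\setC$ sending $g$ to the coefficient of $D$ in $c(g)$. Since \eqref{eq:5} is an isomorphism onto $\Hom(\Gamma_D,\setC)$, in which there is nothing to quotient by, it suffices to prove that $\phi_c=0$ for every $c$.

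The crucial ingredient is the following vanishing criterion: $\phi_c(g)=0$ whenever $g\in\Gamma_D$ commutes with a Dehn twist $\tau=\tau_\sigma$ that acts non-trivially on $D$. To prove it, note that $\tau^n g=g\tau^n$ for all $n\in\setZ$, and expand $c$ of this word in two ways with the cocycle identity, so that $c(\tau^n)+\tau^n c(g)=c(g)+g\,c(\tau^n)$ in $M_D$. Now read off the coefficient of $D$ and use that $\Gamma$ permutes the multicurve basis of $M_D$: the coefficient of $D$ in $g\,c(\tau^n)$ is the coefficient of $g^{-1}D=D$ in $c(\tau^n)$, while the coefficient of $D$ in $\tau^n c(g)$ is the coefficient of $\tau^{-n}D$ in $c(g)$. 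The $c(\tau^n)$-terms thus cancel, leaving the coefficient of $\tau^{-n}D$ in $c(g)$ equal to $\phi_c(g)$ for every $n$. But by Lemma~\ref{lem:5} the multicurves $\tau^{-n}D$ are pairwise distinct, so all but finitely many of them lie outside the finite support of $c(g)\in M_D$; hence $\phi_c(g)=0$.

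Next I would let $\phi_c$ propagate this vanishing. As a homomorphism into the torsion-free group $\setC$ it kills every element of finite order, and by the criterion it kills every element commuting with a twist acting non-trivially on $D$; I claim this forces $\phi_c$ to vanish on the finite-index normal subgroup $\Gamma_D^{0}\le\Gamma_D$ of mapping classes fixing each component of $D$ up to isotopy and orientation. Indeed $\Gamma_D^{0}$ is generated by Dehn twists $\tau_\delta$ with $i(\delta,D)=0$, and for each such $\delta$ one can — using that $D$ is reduced, so that some component of $D$ remains essential after cutting along $\delta$, and that $\Sigma$ has genus at least $2$ — produce a simple closed curve $\sigma$ with $i(\sigma,D)>0$ and $i(\sigma,\delta)=0$; then $\tau_\delta$ commutes with $\tau_\sigma$ by Lemma~\ref{lem:2}, and the criterion gives $\phi_c(\tau_\delta)=0$. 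Since $\phi_c$ then factors through the finite group $\Gamma_D/\Gamma_D^{0}$, it is zero.

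The construction of $\sigma$ fails in exactly one configuration: when $D$ is a single essential simple closed curve $\gamma_0$ and $\delta=\gamma_0$, leaving $\tau_{\gamma_0}$ as the one unhandled generator. As $\tau_{\gamma_0}$ is central in $\Gamma_{\gamma_0}$ and survives in its abelianization, the identity $\phi_c(\tau_{\gamma_0})=0$ cannot follow from the group structure of $\Gamma_D$ alone but must be extracted from relations in the ambient group $\Gamma$. Here I would use the braid relation (Lemma~\ref{lem:3}) between $\tau_{\gamma_0}$ and twists in curves meeting $\gamma_0$ transversely once, and the chain relation (Lemma~\ref{lem:4}) realizing $\tau_{\gamma_0}$ through twists supported in a two-holed torus, and push the coefficient of $\gamma_0$ through these identities — again invoking the finiteness of the supports — until it is exhibited as a combination of quantities already known to vanish. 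Making this last step close up is, I expect, the main obstacle.
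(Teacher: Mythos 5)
Your reduction to showing $\phi_c=0$ and your commutation criterion are both correct, and the criterion is an attractive repackaging of the paper's Lemma~\ref{lem:1}: where the paper fixes the group element to be a twist $\tau_\alpha$ and varies the multicurve $E$ it fixes, you fix the multicurve $D$ and vary the element $g\in\Gamma_D$, but the engine is identical --- a commuting twist pushes $D$ through infinitely many distinct multicurves (Lemma~\ref{lem:5}), which is incompatible with the finite support of $c(g)\in M_D$ unless the coefficient vanishes. Your passage to the finite-index subgroup generated by twists $\tau_\delta$ with $i(\delta,D)=0$ likewise matches the paper's proof of Proposition~\ref{prop:1}, which replaces $f$ by a power $f^n$ realizable on the cut surface $\Sigma_D$ and hence expressible in such twists.

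The genuine gap is the case you flag and then leave open: $D$ a single essential simple closed curve and the generator $\tau_D$ itself. This is not a removable loose end; it is exactly the content of the paper's Proposition~\ref{prop:2}, which is given its own proof. The paper closes it using only the chain relation (the braid relation you invoke is not needed): realize $D=\epsilon$ as a boundary component of an embedded two-holed torus, so that $(\tau_\alpha\tau_\beta\tau_\gamma)^4=\tau_\delta\tau_\epsilon$ with $\alpha,\beta,\gamma,\delta$ all disjoint from $\epsilon$ and none isotopic to it. Expanding $u$ of the left-hand side by the cocycle identity, every term is of the form $w\cdot u(\tau_\theta)$ with $w$ a word in the chain twists and $\theta\in\{\alpha,\beta,\gamma\}$; its coefficient of $\epsilon$ equals the coefficient of $w^{-1}\epsilon=\epsilon$ in $u(\tau_\theta)$, which vanishes by Lemma~\ref{lem:1} (or equally by your own criterion, since for each of $\alpha,\beta,\gamma,\delta$ one finds a disjoint curve meeting $\epsilon$ essentially). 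The same applies to $u(\tau_\delta)$ on the right, and the coefficient of $\epsilon$ in $\tau_\delta u(\tau_\epsilon)$ is the coefficient of $\tau_\delta^{-1}\epsilon=\epsilon$ in $u(\tau_\epsilon)$; hence that coefficient is zero. So your criterion plus this single application of Lemma~\ref{lem:4} does complete the proof, but as written the proposal stops short of one.
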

The proof uses the description of $H^1(\Gamma, \hat M_D)$ as
$\Hom(\Gamma_D, \setC)$ given at the end of
section~\ref{sec:larger-module}. Let $u\colon \Gamma\to M_D$ be a
cocycle. Since $\Gamma$ is generated by Dehn twists, it is natural to
study to which extent $u(\tau_\alpha)$ can contain non-zero terms on
which $\tau_\alpha$ acts trivially for simple closed curves $\alpha$.
\begin{lemma}
  \label{lem:1}
  Let $\alpha$ be a simple closed curve on $\Sigma$, and let $E\in
  \Gamma D$ be a multicurve such that $\tau_\alpha E = E$. Assume that
  $E$ contains at least one component which is not a parallel copy of
  $\alpha$. Then the coefficient of $E$ in $u(\tau_\alpha)$ is zero.
\end{lemma}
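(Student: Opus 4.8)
The plan is to exploit the cocycle identity together with the explicit formula \eqref{eq:7} for the action of a twist on a multicurve in terms of twisting coordinates. First I would fix a pants decomposition of $\Sigma$ containing $\alpha$ as one of the pants curves, say $\alpha = \gamma_k$ (this is possible by the argument in the proof of Lemma~\ref{lem:5}; if $\alpha$ is trivial or boundary-parallel there is nothing to prove since then $\tau_\alpha$ acts trivially on all of $\Gamma D$ and the statement is vacuous after passing to a reduced representative). With this choice, \eqref{eq:7} says that $\tau_\alpha$ acts on the $\Gamma D$-basis of $\hat M_D$ by $t_k \mapsto t_k + m_k$, fixing all other Dehn--Thurston coordinates. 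Consequently, for a multicurve $F \in \Gamma D$, the orbit $\{\tau_\alpha^n F : n \in \setZ\}$ is either a single point (when $m_k(F) = 0$, i.e. $i(\alpha, F) = 0$) or an infinite set indexed by $\setZ$ (when $m_k(F) > 0$), by the last sentence of Lemma~\ref{lem:5}.

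Next I would write out the cocycle condition $u(\tau_\alpha^2) = u(\tau_\alpha) + \tau_\alpha \cdot u(\tau_\alpha)$, and more generally $u(\tau_\alpha^{n}) = \sum_{j=0}^{n-1} \tau_\alpha^{j} \cdot u(\tau_\alpha)$ for $n \geq 1$. Since $u$ takes values in $M_D = \setC(\Gamma D)$, each $u(\tau_\alpha^n)$ is a \emph{finite} linear combination of multicurves in $\Gamma D$. I would then examine the coefficient of the specific multicurve $E$. Because $\tau_\alpha E = E$, the multicurve $E$ has $m_k(E) = i(\alpha, E) = 0$, so $E$ is a fixed point of the $\tau_\alpha$-action, and it lies in a $\tau_\alpha$-orbit of size one. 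The hypothesis that $E$ has a component which is not a parallel copy of $\alpha$ will be used to guarantee that $E$ is genuinely one of the basis multicurves appearing with a well-defined coefficient, and in particular that $E$ is not "absorbed" into $\alpha$ under the identifications of Section~\ref{sec:isomorphisms-modules} — this is the point where I would have to be a little careful, since parallel copies of $\alpha$ interact with the reduced-multicurve normalization.

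The key computation is then a finiteness/growth argument: let $c_n$ denote the coefficient of $E$ in $u(\tau_\alpha^n)$. Since $E$ is $\tau_\alpha$-fixed, the operator $\tau_\alpha$ sends the coefficient of $E$ in a vector $v$ to the coefficient of $E$ in $\tau_\alpha v$ unchanged; more precisely, for any $v = \sum a_F F$, the coefficient of $E$ in $\tau_\alpha v$ equals $\sum_{F : \tau_\alpha F = E} a_F = a_E$ (the only $F$ with $\tau_\alpha F = E$ is $E$ itself, since $\tau_\alpha$ is a bijection on $\Gamma D$). Hence from $u(\tau_\alpha^n) = \sum_{j=0}^{n-1}\tau_\alpha^j u(\tau_\alpha)$, taking the coefficient of $E$ gives $c_n = n\, c_1$, where $c_1$ is the coefficient of $E$ in $u(\tau_\alpha)$. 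But $u(\tau_\alpha^n) \in M_D$ is a finite linear combination, so in particular each individual coefficient $c_n$ is a well-defined complex number — that alone does not force $c_1 = 0$. The actual obstruction — and the main point of the lemma — is that the \emph{support} of $u(\tau_\alpha^n)$ must stay finite, yet if $c_1 \neq 0$ then $E$ appears with coefficient $n c_1 \to \infty$; this is not a contradiction by itself either. So the real argument must instead run: apply the cocycle relation in the other direction. The hard part will be extracting the conclusion $c_1 = 0$ from these relations; I expect the right move is to combine $u(\tau_\alpha^{n}) = \sum_{j=0}^{n-1}\tau_\alpha^j u(\tau_\alpha)$ with the analogous identity for $\tau_\alpha^{-1}$, giving $u(\tau_\alpha^{-n}) = -\sum_{j=1}^{n}\tau_\alpha^{-j}u(\tau_\alpha)$, and to observe that the coefficient of $E$ in $u(\mathrm{id}) = 0$ forces a telescoping cancellation that pins down $c_1$. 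Alternatively — and this is the route I would try first — one uses that $\tau_\alpha$ has no reason to fix any \emph{other} multicurve in the finite support of $u(\tau_\alpha)$ unless that multicurve also has zero intersection with $\alpha$; writing $u(\tau_\alpha) = v_0 + v_+$ where $v_0$ is supported on $\tau_\alpha$-fixed multicurves and $v_+$ on multicurves with $i(\alpha,\cdot) > 0$, the cocycle identity for $\tau_\alpha^n$ forces the $v_+$-part of $u(\tau_\alpha)$ to generate orbits whose total size grows, contradicting finiteness of the support of $u(\tau_\alpha^n)$ unless $v_+ = 0$; and then a separate argument, using the hypothesis on the components of $E$ together with the braid and disjointness relations of Lemmas~\ref{lem:2} and~\ref{lem:3} applied to a curve $\beta$ disjoint from $E$ but intersecting $\alpha$, pushes the coefficient of $E$ in $v_0$ to zero. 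The main obstacle is therefore the bookkeeping needed to isolate $E$ from the parallel-copies-of-$\alpha$ part of the support while keeping the support-finiteness argument clean.
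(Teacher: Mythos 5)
Your proposal never arrives at a complete argument, and the two concrete mechanisms you suggest both fail. First, the claim that in the decomposition $u(\tau_\alpha)=v_0+v_+$ one must have $v_+=0$ because the supports of $u(\tau_\alpha^n)=\sum_{j=0}^{n-1}\tau_\alpha^j\, u(\tau_\alpha)$ would otherwise grow is not a contradiction: each $u(\tau_\alpha^n)$ is permitted to have larger (still finite) support as $n$ increases, and indeed $v_+\neq 0$ already occurs for coboundaries $u(g)=g\cdot w-w$ with $\tau_\alpha w\neq w$. Second, as you yourself observe, relations involving only powers of $\tau_\alpha$ yield $c_n=n c_1$ and nothing more; since $E$ is fixed by $\tau_\alpha$, no manipulation of $\tau_\alpha$ alone can constrain the coefficient of $E$. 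A second, carefully chosen twist is indispensable, and your candidate $\beta$ has exactly the wrong properties: you ask for $\beta$ disjoint from $E$ and intersecting $\alpha$, whereas what is needed is $\beta$ disjoint from $\alpha$ (so that $\tau_\alpha$ and $\tau_\beta$ commute, Lemma~\ref{lem:2}) with $\tau_\beta E\neq E$.

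That choice of $\beta$ is the missing idea, and it is precisely where the hypotheses of the lemma enter. Since $\tau_\alpha E=E$, every component of $E$ is disjoint from $\alpha$; the assumption that $E$ has a component $\epsilon$ not parallel to $\alpha$, together with reducedness (no boundary-parallel components), ensures that $\epsilon$ is not boundary-parallel in the cut surface $\Sigma_\alpha$, so a curve $\beta$ disjoint from $\alpha$ with $\tau_\beta\epsilon\neq\epsilon$, hence $\tau_\beta E\neq E$, exists. Commutativity plus the cocycle condition give
\begin{align*}
  (1-\tau_\beta)\cdot u(\tau_\alpha) = (1-\tau_\alpha)\cdot u(\tau_\beta),
\end{align*}
and the coefficient of $E$ on the right-hand side vanishes because $\tau_\alpha E=E$. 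Hence a nonzero term $xE$ in $u(\tau_\alpha)$ forces the term $x\tau_\beta\inv E$ to appear as well; since $\tau_\alpha$ also fixes $\tau_\beta\inv E$, the argument iterates along the infinite orbit $\{\tau_\beta^{-n}E\}_{n\geq 0}$ (infinite by the last part of Lemma~\ref{lem:5}), contradicting the finiteness of the support of $u(\tau_\alpha)\in M_D$. Your Dehn--Thurston bookkeeping and the splitting into $v_0$ and $v_+$ are not needed once this commuting-twist relation is in place.
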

\begin{proof}
  Let $\epsilon$ be a component of $E$ which is not parallel to
  $\alpha$. Then since every component of $E$ is disjoint from
  $\alpha$, and since we assumed that $D$ (and hence $E$) is a reduced
  multicurve, $\epsilon$ is not parallel to a boundary component of
  the (possibly disconnected) surface $\Sigma_\alpha$ obtained by
  cutting $\Sigma$ along $\alpha$. Hence we may find a curve $\beta$
  disjoint from $\alpha$ such that $\tau_\beta \epsilon \not=
  \epsilon$ and thus $\tau_\beta E \not= E$. Then $\tau_\alpha$ and
  $\tau_\beta$ commute, and $u(\tau_\alpha \tau_\beta) = u(\tau_\beta
  \tau_\alpha)$. Using the cocycle condition this becomes
  \begin{align*}
    u(\tau_\alpha) + \tau_\alpha u(\tau_\beta) = u(\tau_\beta) +
    \tau_\beta u(\tau_\alpha),
  \end{align*}
  which we may rewrite as
  \begin{align}
    \label{eq:6}
    (1-\tau_\beta)\cdot u(\tau_\alpha) = (1-\tau_\alpha) \cdot u(\tau_\beta).
  \end{align}
  Now since $\tau_\alpha E = E$, the coefficient of $E$ on the
  right-hand side of \eqref{eq:6} is clearly $0$. Assuming that
  $u(\tau_\alpha)$ contains some non-zero term $xE$ then implies that
  it must also contain the term $x\tau_\beta\inv E$. But since
  $\tau_\alpha$ and $\tau_\beta$ commute, $\tau_\alpha$ also acts
  trivially on $\tau_\beta\inv E$, so we may repeat the above argument
  with $\tau_\beta\inv E$ instead of $E$ and conclude that
  $u(\tau_\alpha)$ then also contains the term $x\tau_\beta^{-2}E$.
  Continuing in this way, $u(\tau_\alpha)$ contains infinitely many
  non-zero terms (since the multicurves $\tau_\beta^n E$ are all
  distinct), which is impossible since we assumed that $u$ took values
  in $M_D$.
\end{proof}
In other words, $\tau_\alpha$ acts non-trivially on ``most'' of the
non-zero terms occuring in $u(\tau_\alpha)$; the possible exception is
when $D$ consists of a single component and the curve $\alpha$ is in
the orbit of $D$ (e.g. if $D$ and $\alpha$ are non-separating
curves). But this possibility is easily ruled out.
\begin{proposition}
  \label{prop:2}
  Let $\epsilon$ be any simple closed curve. Then $\tau_\epsilon$ acts
  non-trivially on any non-zero term occuring in $u(\tau_\epsilon)$.
\end{proposition}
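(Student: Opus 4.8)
The plan is to use Lemma~\ref{lem:1} to reduce the statement to a single very special case, and then to dispatch that case by feeding the chain relation (Lemma~\ref{lem:4}) into the cocycle identity. Suppose, toward a contradiction, that some multicurve $E\in\Gamma D$ with $\tau_\epsilon E = E$ occurs with non-zero coefficient in $u(\tau_\epsilon)$. By Lemma~\ref{lem:1}, $E$ cannot contain a component that is not a parallel copy of $\epsilon$; hence every component of $E$ is parallel to $\epsilon$, and since $E$ (lying in $\Gamma D$) is reduced, this forces $E$ to be the single curve $\epsilon$ and $D$ to be a single simple closed curve in the orbit of $\epsilon$. So it suffices to treat this case: $D$ is a single, non-boundary-parallel simple closed curve and $\epsilon = gD$ for some $g\in\Gamma$. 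Expanding $u(\tau_\epsilon) = u(g\tau_D g\inv)$ with the cocycle relation and using $u(g\inv) = -g\inv u(g)$ gives $u(\tau_\epsilon) = g\,u(\tau_D) + (1-\tau_\epsilon)\,u(g)$; since $\tau_\epsilon$ fixes $\epsilon$, the coefficient of $\epsilon$ on the right is exactly the coefficient of $D$ in $u(\tau_D)$. So it remains to show that the coefficient of $D$ in $u(\tau_D)$ vanishes.

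For this I would first record a small piece of surface topology: because $\Sigma$ has genus at least two and $D$ is essential and not boundary parallel, cutting $\Sigma$ along $D$ yields a surface each of whose components has negative Euler characteristic (as in the proof of Lemma~\ref{lem:5}) and at least one of whose components has positive genus; consequently $\Sigma$ contains an embedded two-holed torus $S$ having $D$ as one of its two boundary curves. (Concretely: take a one-holed torus disjoint from $D$ inside a positive-genus component of the cut surface and band-sum it with an annular neighborhood of $D$.) Write $D'$ for the other boundary curve of $S$ and choose a chain $\alpha,\beta,\gamma$ in the interior of $S$ as in Lemma~\ref{lem:4}, so that $(\tau_\alpha\tau_\beta\tau_\gamma)^4 = \tau_D\tau_{D'}$ in $\Gamma$. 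The features of this relation that matter are that $D$ is disjoint from each of $\alpha,\beta,\gamma$ and from $D'$, and that $D$ is parallel to none of $\alpha,\beta,\gamma$ — each of those three meets another curve of the chain transversely, whereas $D$ meets all of them zero times.

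I would then finish by applying $u$ to the relation and computing the coefficient of the curve $D$ on the two sides, in each case peeling off the twists one at a time with the cocycle identity. On the left, $v := \tau_\alpha\tau_\beta\tau_\gamma$ is supported in $S$ and therefore fixes $D$, so the coefficient of $D$ in $u(v^4) = (1+v+v^2+v^3)\,u(v)$ is four times the coefficient of $D$ in $u(v) = u(\tau_\alpha) + \tau_\alpha u(\tau_\beta) + \tau_\alpha\tau_\beta u(\tau_\gamma)$; since $\tau_\alpha,\tau_\beta,\tau_\gamma$ also fix $D$, this equals the sum of the coefficients of $D$ in $u(\tau_\alpha),u(\tau_\beta),u(\tau_\gamma)$, each of which is zero by Lemma~\ref{lem:1} (with $E = D$, which is fixed by the twist in question and contains a component, namely itself, not parallel to that twist's curve). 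On the right, the coefficient of $D$ in $u(\tau_D\tau_{D'})$ is the coefficient of $D$ in $u(\tau_D)$ plus the coefficient of $D$ in $u(\tau_{D'})$; the latter vanishes by Lemma~\ref{lem:1} when $D'$ is not parallel to $D$, while if $D'$ is parallel to $D$ the right-hand side is simply $u(\tau_D^2)$, contributing twice the coefficient of $D$ in $u(\tau_D)$. Equating the two computations gives $c\cdot(\text{coefficient of }D\text{ in }u(\tau_D)) = 0$ for some $c\in\{1,2\}$, hence that coefficient is zero, as required.

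The step I expect to be the main obstacle is the topological claim that an arbitrary essential non-boundary-parallel curve bounds an embedded two-holed torus — this is exactly where the hypothesis $\mathrm{genus}\ge 2$ is used, and it is what lets Lemma~\ref{lem:4} be applied uniformly. Everything else is bookkeeping with the cocycle identity of the same flavour as in the proof of Lemma~\ref{lem:1}. (If one preferred, curves $D$ bounding a one-holed torus could instead be handled with $(\tau_\alpha\tau_\beta)^6 = \tau_D$ by the identical argument, but passing through the two-holed torus avoids a case split.)
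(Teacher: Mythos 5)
Your proposal is correct and follows essentially the same route as the paper: reduce via Lemma~\ref{lem:1} to showing that the coefficient of the single curve itself vanishes, realize that curve as a boundary component of an embedded two-holed torus, and evaluate the cocycle on the chain relation of Lemma~\ref{lem:4}. The only differences are cosmetic — your initial conjugation step passing from $\epsilon$ to the orbit representative $D$ is harmless but unnecessary, and you are in fact slightly more careful than the paper in handling the degenerate positions of the second boundary curve (null-homotopic, boundary-parallel, or parallel to $D$).
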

\begin{proof}
  By the previous lemma, we only need to prove that $u(\tau_\epsilon)$
  does not contain some non-zero term $x\epsilon$, where $\epsilon$ is
  considered as a $1$-component multicurve. To see this, observe that
  any curve $\epsilon$ can be realized as the $\epsilon$ occuring in
  the chain relation (Lemma~\ref{lem:4}); that is, there exists a
  genus $1$ subsurface of $\Sigma$ with two boundary components, one
  of which is $\epsilon$: If $\epsilon$ is separating, one of the
  connected components obtained by cutting along $\epsilon$ has genus
  $\geq 1$, and we may if necessary choose $\delta$ to be
  null-homotopic. If $\epsilon$ is non-separating, it is always
  possible to find a $\delta$ such that the two curves together bound
  a genus $1$ subsurface.

  Applying the cocycle $u$ to the chain relation, we obtain
  \begin{align*}
    u((\tau_\alpha\tau_\beta\tau_\gamma)^4) = u(\tau_\delta) +
    \tau_\delta u(\tau_\epsilon).
  \end{align*}
  But the left-hand side can be expanded (via the cocycle condition)
  to a sum of various actions of $\tau_\alpha, \tau_\beta,
  \tau_\gamma$ on the values of $u$ on these twists; since they all
  act trivially on $\epsilon$ the coefficient of $\epsilon$ on the
  left-hand side is $0$ by Lemma~\ref{lem:1}. Similarly, $\delta$ acts
  trivially on $\epsilon$, so also the coefficient of $\epsilon$ in
  $u(\tau_\delta)$ is $0$, and hence the coefficient of $\epsilon$ in
  $u(\tau_\epsilon)$ is $0$.
\end{proof}
\begin{proof}[Proposition~\ref{prop:1}]
  Let $u\colon \Gamma\to M_D$ be a cocycle. 
  By the isomorphism \eqref{eq:5} it suffices to prove the following:
  For any diffeomorphism $f\in \Gamma_D$ fixing the multicurve $D$,
  the coefficient of $D$ in $u(f)$ is zero.

  Since $u_|\colon \Gamma_D\to \setC$ is a homomorphism, we may
  consider any power of $f$. Choose $n$ sufficiently large so that
  $f^n$ fixes each component of $D$ and each side of each
  component. Then $f^n$ may be realized as a diffeomorphism of the
  surface $\Sigma_D$ obtained by cutting $\Sigma$ along $D$. This
  implies that $f^n$ can be written as a product of Dehn twists in
  curves not intersecting $D$. Hence, by Proposition~\ref{prop:2}, the
  coefficient of $D$ in $u(f^n)$ is zero, and so is the coefficient of
  $D$ in $u(f)$.
\end{proof}

\section{Proof of the main theorem}
\label{sec:proof-theor}

Before proving Theorem~\ref{thm:1}, we need to quote the main theorem
from \cite{0802.3000}. This requires a little terminology:

Let $\Gamma$ be a group and $X$ an infinite set on which $\Gamma$
acts. We define a \emph{coloring} (or $C$-coloring) of $X$ to be any
map $c\colon X\to C$ into some set $C$ of ``colors''.
\begin{itemize}\tightlist
\item A coloring $c$ is \emph{invariant} if $c(gx) = c(x)$ for
  each $g\in \Gamma$ and $x\in X$.
\item A coloring is \emph{almost invariant} if, for each $g\in \Gamma$, the
  identity $c(x) = c(gx)$ fails for only finitely many $x\in X$.
\item Two colorings are \emph{equivalent} if they assign different colors to
  only finitely many elements of $X$; this is clearly an equivalence
  relation on the set of $C$-colorings.
\item A coloring is \emph{trivial} if it is equivalent to a
  monochromatic (constant) coloring.
\end{itemize}
Letting $\Gamma$ denote the mapping class group of a surface of genus
at least $2$, and $X$ the $\Gamma$-orbit of an arbitrary multicurve,
we have
\begin{theorem}[\cite{0802.3000}]
  \label{thm:3}
  There are no non-trivial almost invariant colorings of $X$.
\end{theorem}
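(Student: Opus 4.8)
The plan is to assume $c\colon X\to C$ is almost invariant and show it is trivial, i.e.\ that $c$ is constant on a cofinite subset of $X$. The engine is a graph structure on $X$ built from only \emph{finitely many} elements of $\Gamma$ which is so highly connected that it survives the deletion of any finite set of edges; almost invariance then guarantees that only finitely many of its edges are ``bichromatic'', and the conclusion follows.

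Concretely, I would first reduce, exactly as in Section~\ref{sec:isomorphisms-modules}, to the case that the underlying multicurve $D$ is reduced. Then I fix a finite set $S\subseteq\Gamma$ and let $G$ be the graph on $X$ with an edge $\{x,gx\}$ for each $x\in X$ and $g\in S$, calling an edge \emph{bad} if its two endpoints receive different colours. Since $S$ is finite and $c$ is almost invariant, $G$ has only finitely many bad edges. The point is to choose $S$ so that $G$ is connected and, moreover, \emph{robustly} so: for every finite set $F$ of edges, $G\setminus F$ has a unique infinite component. Given this, deleting the finitely many bad edges leaves a unique infinite component $K$; every other component is finite and meets an endpoint of a bad edge (otherwise all its $G$-edges survive and connectivity forces it to be all of $G$), so $K$ is cofinite; and $c$ is constant on $K$ because $K$ contains no bad edge. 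Hence $c$ is equivalent to that constant, as desired.

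To build such an $S$ I would cover $X$ by finitely many embedded grids and glue them along lines. For disjoint simple closed curves $\alpha,\beta$ and a multicurve $E$ with $i(\alpha,E)>0$ and $i(\beta,E)>0$, Lemma~\ref{lem:2} and the twisting formula \eqref{eq:7} show that $(p,q)\mapsto\tau_\alpha^{p}\tau_\beta^{q}E$ embeds $\setZ^2$ into $X$ with image a $\setZ^2$-grid (for the edge set coming from $\tau_\alpha$ and $\tau_\beta$); a $\setZ^2$-grid is robustly connected, and so is a finite union of grids in which each is attached to the union of the others along a bi-infinite line --- and two grids $P_{\alpha,\beta,E}$, $P_{\beta,\gamma,E}$ sharing a base multicurve $E$ and one twisting curve $\beta$ automatically share the line $\{\tau_\beta^{q}E\}$. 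So it is enough to produce: (i) finitely many such triples of curves so that \emph{every} reduced multicurve $E\in X$ is essentially crossed by two disjoint curves among them, and hence lies on one of the grids --- here the hypothesis that $\Sigma$ has genus at least $2$ provides the room needed; and (ii) a linking of all these grids, by a chain of shared lines (together with finitely many auxiliary elements of $\Gamma$ witnessing that all the chosen base multicurves lie in the single orbit $X$), into one robustly connected graph.

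The main obstacle is (ii): arranging that the grids genuinely chain together. This is a connectivity statement about configurations consisting of a multicurve in the orbit together with a pair of disjoint transverse curves, where the allowed moves replace one curve of the pair at a time; I expect it to require a careful analysis in Dehn--Thurston coordinates (treating separately the components of $E$ parallel into a fixed pants decomposition and those transverse to it, and invoking the change-of-coordinates principle), and the use of pseudo-Anosov mapping classes --- all of whose $X$-orbits are bi-infinite lines, since a pseudo-Anosov has no periodic multicurve --- to bridge configurations supported in different parts of $\Sigma$. By contrast, the reduction to reduced multicurves, the finiteness of the set of bad edges, robust connectivity of a grid, and the final deduction of triviality are all routine.
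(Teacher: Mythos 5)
First, a point of order: the paper does not prove Theorem~\ref{thm:3} at all --- it is imported verbatim from \cite{0802.3000} --- so there is no in-paper argument to measure yours against. Your general engine is nevertheless the right one, and as far as I can tell it is essentially the engine of the cited paper: build a graph on $X$ from finitely many mapping classes, note that almost invariance leaves only finitely many bichromatic edges, and win provided the graph is one-ended in the robust sense you describe. The bookkeeping in your second paragraph (finitely many bad edges; uniqueness and cofiniteness of the surviving infinite component; constancy of $c$ on it) is correct, as is the construction of a $\setZ^2$-grid from two disjoint twisting curves via Lemma~\ref{lem:2} and \eqref{eq:7}, and the observation that grids sharing an infinite line glue robustly.

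However, what you have written is a plan rather than a proof: the two claims (i) and (ii) that you defer are precisely the content of the theorem. For (i), it is not clear that a fixed finite list of disjoint pairs $(\alpha,\beta)$ can be chosen so that \emph{every} $E$ in the orbit meets both members of some pair essentially; the delicate cases are the ``simple'' multicurves $E$ whose complementary subsurfaces absorb many of your test curves, and this needs an actual argument, not an appeal to genus $\geq 2$. For (ii), the difficulty is compounded by a robustness constraint you partly elide: consecutive grids in a chain must share an \emph{infinite} vertex set (or be joined by infinitely many pairwise disjoint paths), since a single bridging path --- such as one segment of a pseudo-Anosov orbit line --- is severed by deleting finitely many edges and therefore cannot link two grids in $G\setminus F$. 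Establishing that any two grids, based at multicurves arbitrarily far apart in the orbit, are joined by such a robust chain built from one fixed finite $S$ is the whole theorem, and ``I expect it to require a careful analysis in Dehn--Thurston coordinates'' does not discharge it. So the proposal correctly reduces the statement to a concrete combinatorial claim about configurations of curves, but leaves that claim unproved.
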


Now we have all the tools we need.
\begin{proof}[Theorem~\ref{thm:1}]
  By the isomorphism \eqref{eq:1} and the splitting \eqref{eq:2}, it
  suffices to prove that each summand $H^1(\Gamma, M_D)$ vanishes. By
  Proposition~\ref{prop:1}, we need only show that the map $\iota_*$
  is injective. By the exact sequence \eqref{eq:4}, this is equivalent
  to proving that $H^0(\Gamma, \hat M_D) \to H^0(\Gamma, \hat
  M_D/M_D)$ is surjective.

  Now, an invariant element of $\hat M_D/M_D$ is represented by an
  element $v\in \hat M_D = \Map(\Gamma D, \setC)$ such that for each
  $g\in \Gamma$ we have $v-gv \in M_D$. Since $(v-gv)(E) = v(E) -
  v(g\inv E)$ for $E\in \Gamma D$, we see that this must be zero for
  all but finitely many $E\in \Gamma D$. In other words, $v$ must be
  an almost invariant $\setC$-coloring of $\Gamma D$ in the above
  language, and since by Theorem~\ref{thm:3} no non-trivial almost
  invariant colorings of $\Gamma D$ exist, we conclude that $v$ is
  almost constant, ie. all but finitely many elements of $\Gamma D$ is
  mapped to the same complex number $z$. But then $v$ represents the
  same element of $\hat M_D/M_D$ as the constant linear combination
  $\sum_{E\in \Gamma D} z E$, and hence $H^0(\Gamma, \hat M_D) \to
  H^0(\Gamma, \hat M_D/M_D)$ is in fact surjective.
\end{proof}

\bibliographystyle{alphaurl}
\bibliography{../phd}

\end{document}